%Stand: 11.7.15
\documentclass{article}
\usepackage{amsmath,amsfonts,amssymb,amsthm}
\usepackage{epsfig}
\usepackage{mathrsfs}
\usepackage{enumerate}
\usepackage{color}
\newtheorem{theorem}{Theorem}

\allowdisplaybreaks
\theoremstyle{remark}
\newtheorem{rem}{Remark}[section]

\newcommand{\Nor}{{\mathrm{Nor}}\,}
\newcommand{\pos}{{\mathrm{pos}}\,}

\newcommand{\R}{{\mathbb R}}

\catcode`@=11
\def\section{%
\setcounter{equation}{0} \setcounter{theorem}{0} \@startsection
{section}{1}{\z@}{-4.0ex plus -1ex minus
    -.2ex}{2.3ex plus .2ex}{\bf}}
\catcode`@=12 \theoremstyle{definition}

\begin{document}

\title{Kinematic formulas for area measures}

\author{Paul Goodey, Daniel Hug, and 
Wolfgang Weil}
\date{\today}

\maketitle
\begin{abstract}
We obtain a Principal Kinematic Formula and a Crofton Formula for surface area measures of convex bodies, both involving linear operators on the vector space of signed measures on the unit sphere $S^{d-1}$. These formulas are related to a localization of Hadwiger's Integral Geometric Theorem. The operators, mentioned above, will be shown to be  compositions of spherical Fourier transforms originating in the work of Koldobsky. As an application of our Crofton Formula, we will find an extension of Koldobsky's orthogonality relation for such transforms from the case of even spherical functions to centered functions.

\medskip\noindent
{\it Key words:} Principal kinematic formula, Crofton formula, surface area measures, convex bodies, spherical Fourier transforms, orthogonality relation.

\medskip\noindent
{\it 2010 Mathematics Subject Classification:} 52A20, 52A22, 52A39, 53C65.
\end{abstract}

\section{Introduction}

Let ${\cal K}$ be the space of compact convex sets in $\R^d, d\ge 3,$ supplied with the Hausdorff metric, and let ${\cal K}'$ be the subset of non-empty elements (convex bodies). For $K\in {\cal K}$ and $j\in\{0,....,d\}$, let $V_j(K)$ denote the $j$th intrinsic volume of $K$ and $C_j(K,\cdot)$ the $j$th curvature measure. For $j\in\{1,....,d-1\}$,  $S_{j}(K,\cdot )$ is the $j$th area measure of $K$ (see \cite{S}, for the standard notions from convex geometry used in this article). 
Let $G(d,j)$ and $A(d,j)$ be the manifolds of $j$-dimensional linear (respectively, affine) subspaces of $\R^d$, supplied with their natural topologies and (suitably normalized) invariant measures $\nu_j$ and $\mu_j$. Moreover, $G_d$   is the group of rigid motions with Haar  measure $\mu$ and $SO_d$ is the rotation group with Haar probability measure $\nu$ (see \cite{SW}).

Two basic results in integral geometry are the {\it Principal Kinematic Formula (PKF)} 
\begin{equation}\label{PKF}
\int_{G_d} V_j(K\cap gM)\,\mu(d g) = \sum_{k=j}^d c(d,j,k)V_k(K) V_{d+j-k }(M), 
\end{equation}
which holds for $j= 0,\dots ,d,$ 
and the {\it Crofton Formula (CF)}
\begin{equation}\label{CF} 
\int_{A(d,q)} V_j(K\cap E)\,\mu_q(d E) = c(d,j,q) V_{d+j-q}(K), 
\end{equation}
which holds for $j=0,\dots,q$. 
Here $0\le q\le d-1$ (the case $q=d$ is trivial), $K,M\in{\cal K}$ and the $c(d,j,k)$ are known constants.

For the curvature measures, there exist local versions of PKF and CF, namely
\begin{equation}\label{PKF2}
\int_{G_d} C_j(K\cap gM, A\cap gB)\,\mu(d g) = \sum_{k=j}^d \tilde c(d,j,k)C_k(K,A) C_{d+j-k }(M,B), 
\end{equation}
$j= 0,\dots ,d,$ and
\begin{equation}\label{CF2} 
\int_{A(d,q)} C_j(K\cap E, A\cap E)\,\mu_q(d E) = \tilde c(d,j,q) C_{d+j-q}(K,A), 
\end{equation}
$j=0,\dots,q,$
which hold for arbitrary Borel sets $A,B\subset\R^d$ and known constants $\tilde c(d,j,q)$.

It is easy to see that corresponding results for area measures cannot hold in exactly the same form. For example, a CF for area measures would state that
\begin{equation}\label{CF3} 
\int_{A(d,q)} S_j(K\cap E, A\cap L(E))\,\mu_q(d E) = \bar c(d,j,q) S_{d+j-q}(K,A), 
\end{equation}
for $j=1,\dots,q-1$, some constants $\bar c(d,j,q)$ and Borel sets $A$ in the unit sphere $S^{d-1}$ (it is obvious that the intersection of $A$ has to be taken not with the affine subspace $E$, but with the linear subspace $L(E)$ parallel to $E$). Formula \eqref{CF3} is not true for polytopes $K$. Namely, for a polytope $K$, the measure $S_j(K,\cdot)$ is concentrated on the $(d-j-1)$-dimensional boundary parts of a spherical cell complex. More precisely, these boundary parts arise as the spherical images of the $j$-dimensional faces of $K$ and, on each such spherical image, $S_j(K,\cdot)$ is a positive multiple of the corresponding Hausdorff measure. If we choose the Borel set $A$ in \eqref{CF3} to be the support of $S_{d+j-q}(K,\cdot)$,  then, for $\mu_q$-almost all $E$, the intersection $A\cap L(E)$ has dimension $q + d-(d+j-q)-1 -d = 2q-j-1-d< d-j-1$ and therefore $S_j(K\cap E, A\cap L(E)) =0$. Thus the left side of \eqref{CF3} vanishes. 

As a variant, one could replace the integrand by $S'_j(K\cap E, A\cap L(E))$, where the prime indicates that the area measure is calculated in the subspace $L(E)$. Then, for polytopes, the support of $S'_j(K\cap E, \cdot)$ has dimension $q-j-1$ and, since $2q-j-1-d < q-j-1$, the integral in \eqref{CF3} still vanishes.

A generalization of \eqref{PKF2} and \eqref{CF2}, which involves area measures, was investigated by Glasauer \cite{Gl}, who showed corresponding formulas for the support measures $\Theta_i(K,\cdot)$, namely
\begin{equation}\label{PKF4}
\int_{G_d} \Theta_j(K\cap gM, A\wedge gB)\,\mu(d g) = \sum_{k=j}^d \tilde c(d,j,k)\Theta_k(K,A) \Theta_{d+j-k }(M,B), 
\end{equation}
for $j= 0,\dots ,d$, and
\begin{equation}\label{CF4} 
\int_{A(d,q)} \Theta_j(K\cap E, A\wedge E)\,\mu_q(d E) = \tilde c(d,j,q) \Theta_{d+j-q}(K,A), 
\end{equation}
for $j=1,\dots,q-1$, with the same constants $\tilde c(d,j,k)$ as above. The support measures are common generalizations of the curvature and the area measures and are supported on the normal bundle of the bodies. To be more precise, the normal bundle $\Nor K$ of a body $K$ is defined as
$$
\Nor K = \{ (x,u) : x {\rm\ a\ boundary\ point\ of\ } K, u {\rm\ an\ outer\ normal\ of\ }K{\rm\ at\ }x\}
$$
and the formulas above hold for Borel sets $A\subset\Nor K, B\subset\Nor M$ with
\begin{align*}
A\wedge B = \{ (x,u) : &{\rm\ there\ are\ }u_1,u_2\in S^{d-1}{\rm\ such\ that\ }\cr
&(x,u_1)\in A, (x,u_2)\in B, u\in \pos(u_1,u_2)\},
\end{align*}
where $\pos (u_1,u_2)$ denotes the positive hull of the vectors $u_1$ and $u_2$. The set $A\wedge E$, for $E\in A(d,q),$ is defined in a similar way, by replacing the condition $(x,u_2)\in \Nor M$ by $(x,u_2)\in E\times E^\bot$.

The area measures are projections of the support measures,
$$
S_i(K,\cdot) = \Theta_i (K,\partial K\times \cdot),
$$
where $\partial K$ is the boundary of $K$, and similarly we have $C_i(K,\cdot) = \Theta_i (K,\cdot\times S^{d-1})$. This, however,  does not lead to explicit kinematic formulas for area measures since, for $A\subset \Nor K$ and $B\subset \Nor M$, the set $A\wedge gB$ is not of the form $\partial(K\cap gM)\times C$, for a set $C=C(A,B)\subset S^{d-1}$, in general. Actually, $A\wedge gB$ only involves boundary points of $K\cap gM$ which are common boundary points of $K$ and of $gM$.  

In the following, we investigate such explicit kinematic formulas for area measures. For this purpose, we define $S_d(K,\cdot) = \omega_d V_d(K)\sigma $, where $\sigma$ is the normalized (probability measure) spherical Lebesgue measure on $S^{d-1}$  and $\omega_d=2\pi^{d/2}/\Gamma(d/2)$ (which is the spherical Lebesgue measure of $S^{d-1}$). We shall show that
\begin{equation}\label{PKF5}
\int_{G_d} S_j(K\cap gM, A)\,\mu(d g) = \sum_{k=j}^d  [T_{d,j,k}S_{d+j-k}(K,\cdot)](A) V_{k}(M), 
\end{equation}
for $j= 1,\dots ,d-1,$ and
\begin{equation}\label{CF5} 
\int_{A(d,q)} S_j(K\cap E, A)\,\mu_q(d E) = [T_{d,j,q} S_{d+j-q}(K,\cdot)](A), 
\end{equation}
for $q=2,\ldots,d$ and $j=1,\dots,q-1,$
hold with certain continuous linear operators $T_{d,j,k}$ (depending only on the dimensions $d,j$ and $k$) on the vector space ${\cal M}(S^{d-1})$ of finite signed measures on $S^{d-1}$, supplied with the  weak$^*$ topology. More precisely, we show in Section 3 that $T_{d,j,k}$ is proportional to a composition of the Fourier operators $I_p$ which were studied in \cite{GYY} and \cite{GW} (and a reflection).

In Section 4 we collect some variations and applications of the kinematic formulas.
As a further application of the more explicit version of \eqref{CF5}, we generalize, in the final section, an orthogonality relation for Fourier operators on the sphere, due to Koldobsky \cite{Kol}, from the symmetric to the general (not necessarily symmetric) case.

\section{A local version of Hadwiger's integral theorem}

In this section, we establish the following slight generalization of Hadwiger's general integral geometric theorem (see \cite[Theorem 5.1.2]{SW}). We denote by ${\cal M}^+(S^{d-1})$ the cone of nonnegative measures in ${\cal M}(S^{d-1})$. 

\begin{theorem}\label{thmlocHad} Let $\varphi : {\cal K}'\to {\cal M}^+(S^{d-1})$ be a continuous and additive mapping with $\varphi(\emptyset,\cdot)=0$ (the zero measure). Then, for $K,M\in{\cal K}$ and Borel sets $A\subset S^{d-1}$,
\begin{equation}\label{abstractPKF}
\int_{G_d} \varphi (K\cap gM, A)\,\mu(d g) = \sum_{k=0}^{d}  [T_{d,k}\varphi(K,\cdot)](A) V_{k }(M), 
\end{equation}
with mappings  $T_{d,k} : {\cal M}^+(S^{d-1})\to {\cal M}^+(S^{d-1})$ which are given by the Crofton integrals
\begin{equation}\label{abstractCF} 
T_{d,k}\varphi(K,\cdot) = \int_{A(d,k)} \varphi(K\cap E, \cdot)\,\mu_k(d E) , \quad k=0,\dots, d.
\end{equation}
\end{theorem}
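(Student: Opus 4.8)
The plan is to follow the classical proof of Hadwiger's general integral geometric theorem (as in \cite[Theorem 5.1.2]{SW}), but carrying the Borel set $A\subset S^{d-1}$ along as a passive parameter throughout. First I would fix an arbitrary Borel set $A\subset S^{d-1}$ and consider the functional $K\mapsto \varphi(K,A)$ on $\mathcal K'$. Since $\varphi$ is additive, continuous, and satisfies $\varphi(\emptyset,\cdot)=0$, the real-valued map $K\mapsto \varphi(K,A)$ inherits additivity and continuity (continuity in the Hausdorff metric follows from weak$^*$-continuity of $\varphi$ together with a standard argument; one should be slightly careful that $A$ need not be a continuity set for the limiting measure, but additivity plus monotonicity of $\varphi(\cdot,A)$ on the set of bodies where it is defined, or an approximation through the valuation property, handles this — alternatively one works first with open or closed $A$ and then extends by a monotone class/Dynkin argument). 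The key structural input is that additive, continuous, motion-\emph{covariant} (not invariant, since $A$ breaks the rotation symmetry) valuations do not immediately reduce to intrinsic volumes; instead one must exploit covariance only in the translations.

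The heart of the argument is therefore the following: for fixed $A$, define
\begin{equation}\label{defpsi}
\psi_A(K) := \int_{G_d} \varphi(K\cap gM, A)\,\mu(dg),
\end{equation}
which, by Fubini and the additivity and continuity of $\varphi(\cdot,A)$, is itself an additive, continuous, and \emph{translation-invariant} functional of $K$ — indeed $\psi_A(K+x)=\psi_A(K)$ because the Haar measure $\mu$ is translation-invariant, and this is where invariance (rather than mere covariance) is recovered after integrating over all of $G_d$. By Hadwiger's characterization theorem, $\psi_A$ is a linear combination $\sum_{k=0}^d a_k(M,A) V_k(K)$ with coefficients depending on $M$ and $A$. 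Running the same argument with the roles of $K$ and $M$ interchanged, or rather treating $\psi_A$ as a function of $M$ for fixed $K$ (it is again additive, continuous, and motion-invariant in $M$ because $\mu$ is inversion-invariant), shows the coefficient of $V_k(M)$ is an additive continuous functional of $K$ with values in $\mathcal M^+(S^{d-1})$; call it $[T_{d,k}\varphi(K,\cdot)](A)$. Bilinearity in $(K,M)$ then forces the form \eqref{abstractPKF}. The operators $T_{d,k}$ act on $\mathcal M^+(S^{d-1})$ because for fixed $K$ the assignment $A\mapsto [T_{d,k}\varphi(K,\cdot)](A)$ is, by construction as a coefficient in a finite linear combination of the honest measures $\psi_A(K)$ via inclusion–exclusion, itself a nonnegative measure (one extracts $T_{d,k}$ by evaluating \eqref{abstractPKF} at a finite list of test bodies $M$ with linearly independent intrinsic-volume vectors, e.g. balls of $d$ distinct radii, and inverting a Vandermonde-type system; the resulting signed-measure combination is nonnegative because it equals a limit of genuine integrals $\psi_A$).

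Finally, to identify $T_{d,k}\varphi(K,\cdot)$ with the Crofton integral \eqref{abstractCF}, I would specialize $M$ in \eqref{abstractPKF} to a $k$-dimensional ball (or a sequence of flat convex bodies converging to a $k$-flat) of increasing radius and extract the leading term, exactly as in the classical deduction of the Crofton formula from the principal kinematic formula; more cleanly, one applies \eqref{abstractPKF} with $M$ a $k$-dimensional disk $D$ of radius $R$ and lets $R\to\infty$ after normalizing, using that $\int_{G_d}\varphi(K\cap gD,A)\,\mu(dg)$ decomposes, via the standard disintegration of $\mu$ over the affine Grassmannian $A(d,k)$ (see \cite{SW}), into $\int_{A(d,k)}\int (\text{rotations fixing }E)\,\varphi(K\cap E',A)\,(\cdots)$, and the only term surviving the normalization is the one matching $V_d(D)\sim R^k$-growth, which yields precisely $\int_{A(d,k)}\varphi(K\cap E,A)\,\mu_k(dE)$. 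The main obstacle I anticipate is the measure-theoretic bookkeeping in handling $A$ as a genuine Borel set: ensuring that $K\mapsto\varphi(K,A)$ is continuous enough (on a suitable dense subclass, or up to sets of measure zero in the flag integrals) to invoke Hadwiger's theorem, and that the finitely-many-test-body extraction genuinely produces a \emph{measure} in $A$ and not merely a finitely additive set function — both are standard but need care, and the cleanest route is probably to prove \eqref{abstractCF} first (it is a single integral, so measurability and the monotone class argument in $A$ are transparent) and then derive \eqref{abstractPKF} from it together with a suitable recursion, rather than the reverse.
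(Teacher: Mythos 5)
There is a genuine gap at the very first step, and it is precisely the point your ``main obstacle'' paragraph circles around without resolving. For a fixed Borel set $A\subset S^{d-1}$, the map $K\mapsto\varphi(K,A)$ is in general \emph{not} continuous: weak$^*$ continuity of $\varphi$ only gives convergence of $\varphi(K_i,A)$ when $A$ is a continuity set of the limit measure, and for open or closed $A$ one gets only semicontinuity, which is not enough to invoke either Hadwiger's characterization theorem or the general integral-geometric theorem \cite[Theorem 5.1.2]{SW}; a monotone class argument cannot repair this, because the class of sets $A$ for which $\varphi(\cdot,A)$ is continuous need not be a $\lambda$-system containing a generating $\pi$-system. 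The paper's proof sidesteps this entirely by dualizing: it fixes a continuous function $f$ on $S^{d-1}$, works with $\varphi_f(K)=\int_{S^{d-1}} f\,d\varphi(K,\cdot)$, which \emph{is} continuous and additive, applies \cite[Theorem 5.1.2]{SW} to $M\mapsto\int_{G_d}\varphi_f(K\cap gM)\,\mu(dg)$, and then recovers the identity between measures from the fact that the resulting scalar identity holds for all $f\in C(S^{d-1})$. This single move also makes your Vandermonde extraction and the large-disk limit unnecessary: \cite[Theorem 5.1.2]{SW} already delivers both the expansion in $V_k(M)$ and the Crofton-integral form of the coefficients, and the nonnegativity and $\sigma$-additivity of $T_{d,k}\varphi(K,\cdot)$ are immediate from \eqref{abstractCF}, which is an integral of nonnegative measures.

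A second, smaller error: you invoke Hadwiger's characterization theorem for $\psi_A$ \emph{as a function of $K$}, on the strength of translation invariance alone. Hadwiger's theorem requires full rigid-motion invariance, and $\psi_A$ is genuinely not rotation invariant in $K$: rotating $K$ amounts (after substituting in the Haar integral) to rotating the set $A$, which, as you yourself note, breaks the symmetry. The expansion that is actually needed, and the only one available, is in the variable $M$, where $\psi_A$ is motion invariant by the invariance of $\mu$; your proposal does eventually pivot to this, but the detour through a $V_k(K)$-expansion is unjustified and should be deleted.
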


\begin{proof} For the $\mu$-integrability of the integrand in \eqref{abstractPKF}, we reference the discussion in \cite[p. 173]{SW}.

We fix  a real-valued continuous function $f$ on $S^{d-1}$ and define a map $\varphi_f:\mathcal{K}\to\R$ by 
$$
\varphi_f(K)= \int_{S^{d-1}} f(u)\, \varphi (K,du) 
$$ 
for $K\in\mathcal{K}$.  The functional $\varphi_f$ is continuous and additive, since $\varphi$ is continuous and additive. 
For a given $K\in\mathcal{K}$, we then consider the mapping $T_{K,f} : {\cal K}\to \R$ given by
$$
T_{K,f}(M)= \int_{G_d}  \varphi_f (K\cap gM)\,\mu(d g), 
$$
for $M\in{\cal K}$.

In this situation, we can apply \cite[Theorem 5.1.2]{SW} to conclude that 
$$
T_{K,f}(M) = \sum_{k=0}^{d} c_k(K,f) V_k(M)
$$
with constants $c_k(K,f)$ given by 
$$
c_k(K,f) = \int_{A(d,k)} \varphi_f (K\cap E)\, \mu_k(dE) ,
$$
for $k=0,\dots ,d$. 
Using Fubini's theorem, we get
\begin{align*}
\int_{S^{d-1}} &f(u) \left[ \int_{G_d} \varphi (K\cap gM, \cdot)\,\mu(d g)\right]\, (du)\cr
 &= \int_{S^{d-1}} f(u) \left[\sum_{k=0}^{d}  [T_{d,k}\varphi(K,\cdot)] V_{k }(M)\right]\, (du)
\end{align*}
with measures $T_{d,k}\varphi(K,\cdot)$
given by 
$$T_{d,k}\varphi(K,\cdot) = \int_{A(d,k)} \varphi(K\cap E, \cdot)\,\mu_k(d E)$$
Since this holds for all continuous functions $f$, the proof is completed.
\end{proof}

\section{The kinematic formulas}

The purpose of this section is to give a proof of the kinematic formulas \eqref{PKF5} and \eqref{CF5} for area measures. In order to describe the transform $T_{d,j,k}$ in more detail, we first briefly recall the definition and properties of the Fourier operators $I_p,$ for $p=-1,0,1,\dots,d,$ (for details, we refer to \cite{GYY} and \cite{GW}). For a $C^\infty$-function $f$ on $S^{d-1}$ and $p\in\mathbb Z$, let $f_p$ be the homogeneous degree $-d+p$ extension of $f$ to
$\mathbb R^d\setminus \{0\}$. The distributional Fourier transform of
this is denoted by $\hat f_p$. For $0<p<d$, the restriction of
$\hat f_p$ to $S^{d-1}$ is again a smooth function. The mapping $I_p : f\mapsto \hat f_p|_{S^{d-1}}$ intertwines the group action of $SO(d)$, hence it acts as a multiple of the identity on the spaces $H_n^d$, $n=0,1,\dots$, of spherical harmonics. For even $n$, this multiple is real whereas, for odd $n$, it is purely imaginary. In fact, if we denote the multiples by $\lambda_n(d,p)$, we have, for $0<p<d$, 
\begin{equation}\label{multipliers}
\lambda_n(d,p)=\pi^{d/2}2^p(-1)^{n/2}\frac{\Gamma(\frac{n+p}2)}{\Gamma(\frac{n+d-p}2)},\qquad n=0,1,\dots;
\end{equation}
see \cite{GYY}, for example. In the following, we will  use the composition of two of these mappings for different values of $p$. Thus, we will have a mapping $I_pI_q:C^\infty(S^{d-1})\to C^\infty(S^{d-1})$ say, which can be defined in terms of its action, by multiplication, on the spaces $H_n^d\subset C^\infty(S^{d-1})$. Now, the multipliers are real for both the even and odd harmonics. 
The operators $I_p$ can be extended to values of $p$ beyond the integers in the interval $(0,d)$, by analytic continuation of the gamma functions in \eqref{multipliers}. In particular, we will use the operator  $I_{-1}$, which acts on $C^\infty$-functions without (non-trivial) linear part. For $1\le p\le d-1$, the linear operator $I_p$ is bijective and the inverse is (up to a reflection) a multiple of the operator $I_{d-p}$. More precisely,
\begin{equation}\label{inversion}
 I_{d-p}I_p=(2\pi)^dI^*,
\end{equation}
 where $(I^*f)(u)=f(-u)$. Also, since the operators $I_p$ are self-adjoint, they can be extended, by duality, to act on distributions and thus, in particular, on measures $\rho\in{\cal M}(S^{d-1})$. 
 
For $K\in\mathcal{K}$ and $k\in \{1,\ldots,d\}$,  the $k$th {\em mean section body} $M_k(K)$ of $K$ is defined by
$$
h(M_k(K),\cdot)=\int_{A(d,k)}h(K\cap E,\cdot)\, \mu_k(dE),
$$
where $h(K,\cdot)$ is the support function of $K$. The linearity of the first area measure then implies that
\begin{equation}\label{area1MSB}
S_1(M_k(K),\cdot)=\int_{A(d,k)}S_1(K\cap E,\cdot)\, \mu_k(dE).
\end{equation}
For $k\in\{2,\dots, d\}$ and a convex body $K$ with $\dim K \ge d+2-k$, it follows 
from Corollary 3.5 in \cite{GW} that the area measures of $K$ and $-M_k(K)$ are connected by the relation
\begin{equation}\label{meansec}
S_{d+1-k}(K,\cdot) = c(d,k) I_{d-1}I_{d+1-k}S_1(-M_k(K),\cdot)
\end{equation}
with
\begin{equation}\label{constant} c(d,k)=\frac{(d-1)(d+1-k)\Gamma\left(\frac{k}{2}\right)}{(k-1)2^{2d-k}\pi^{\frac{3d-k}{2}}\Gamma\left(\frac{d}{2}
\right)}.
\end{equation}

We now formulate \eqref{CF5} in a more precise version.

\begin{theorem}\label{surfCrofton} For $1\le j<q\le d$ and $K\in {\cal K}$, we have
\begin{equation}\label{croftform}
\int_{A(d,q)}S_j(K\cap E,\cdot)\,\mu_{q}(dE)=a(d,j,q)I_{j}I_{q-j}S_{d+j-q}(-K,\cdot)
\end{equation}
with
\begin{equation}\label{coefficients}
a(d,j,q) = \frac{j}{2^d\pi^{(d+q)/2}(d+j-q)}\frac{\Gamma (\frac{q+1}{2})\Gamma({d-j})}{\Gamma (\frac{d+1}{2})\Gamma({q-j})} .
\end{equation}
\end{theorem}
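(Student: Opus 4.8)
\medskip
\noindent\emph{Proof plan.} The plan is to prove \eqref{croftform} first for $j=1$, where the mean section identities \eqref{area1MSB} and \eqref{meansec} apply directly, and then to reach general $j$ by iterating Crofton integrals over flats of an intermediate dimension. Since both sides of \eqref{croftform} depend weak$^*$-continuously on $K$, I may replace an arbitrary $K\in\mathcal{K}$ by $K+\ee\B$, prove the formula there, and let $\ee\downarrow 0$; hence I will assume $K$ to be $d$-dimensional.

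For $j=1$ (so $2\le q\le d$), formula \eqref{area1MSB} identifies the left-hand side of \eqref{croftform} with $S_1(M_q(K),\cdot)$, the first area measure of the $q$-th mean section body. By \eqref{meansec} with $k=q$, which applies because $\dim K=d\ge d+2-q$, one has $S_{d+1-q}(K,\cdot)=c(d,q)\,I_{d-1}I_{d+1-q}S_1(-M_q(K),\cdot)$. Each $I_p$ acts as a scalar on every space $H_n^d$ of spherical harmonics, so the $I_p$ commute among themselves and with the reflection $I^*$; using $I^*S_1(-L,\cdot)=S_1(L,\cdot)$ and $I^*S_{d+1-q}(K,\cdot)=S_{d+1-q}(-K,\cdot)$ this becomes $S_{d+1-q}(-K,\cdot)=c(d,q)\,I_{d-1}I_{d+1-q}S_1(M_q(K),\cdot)$. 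Applying $I_1I_{q-1}$ to both sides and using the inversion formula \eqref{inversion} in the two forms $I_1I_{d-1}=(2\pi)^dI^*$ and $I_{q-1}I_{d+1-q}=(2\pi)^dI^*$, together with $(I^*)^2=\mathrm{id}$, I get $I_1I_{q-1}S_{d+1-q}(-K,\cdot)=(2\pi)^{2d}c(d,q)\,S_1(M_q(K),\cdot)$. This is \eqref{croftform} for $j=1$ with $a(d,1,q)=\big[(2\pi)^{2d}c(d,q)\big]^{-1}$, and a short computation from \eqref{constant}, using the Legendre duplication formula for the Gamma function, confirms that this agrees with \eqref{coefficients} when $j=1$.

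For general $j$ I would proceed by induction, reducing the $j$-th area measure of a $q$-dimensional section to area measures of lower index of iterated sections. Fixing $E\in A(d,q)$ and the body $K\cap E\subset E\cong\R^q$, I would apply a Crofton identity internal to $E$ — an instance of the theorem in dimension $q$, or, at the bottom, the case $j=1$ treated above — to express $S_j(K\cap E,\cdot)$ as a fixed Fourier operator applied to a Crofton integral over flats contained in $E$. The transitivity of the invariant flat measures (the disintegration of $\mu_m$ along $\mu_q$ for $q\le m$, together with Fubini's theorem) then collapses the resulting double integration into a single integration over $A(d,q)$, and the two successive Crofton integrations are what produce the two factors $I_j$ and $I_{q-j}$ of \eqref{croftform}. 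Two comparison lemmas are needed: one expressing the area measure of a lower-dimensional convex body in $\R^d$ in terms of the area measure computed within its affine hull, and one relating the operators $I_p$ on $S^{d-1}$ to their counterparts on the great subspheres $S^{d-1}\cap E$; both follow from the multiplier formula \eqref{multipliers}. As an alternative, once one knows that the left-hand side of \eqref{croftform} is a spherical multiplier operator applied to $S_{d+j-q}(-K,\cdot)$ — e.g.\ from a classification of continuous, translation-invariant, $SO(d)$-equivariant, $\cM(S^{d-1})$-valued valuations — identifying that multiplier comes down to a single computation, say on a ball, where \eqref{multipliers} is matched against \eqref{coefficients}.

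The hard part will be this second step: controlling the passage between intrinsic and ambient area measures and between the associated Fourier operators, and, above all, checking that the constants accumulated along the way collapse to exactly \eqref{coefficients}. Because many Gamma-factors appear and must be simplified via duplication and reflection identities, the computation should be arranged to telescope — I would establish the operator identity abstractly (through the multipliers) first and pin down the single scalar $a(d,j,q)$ only at the end, rather than dragging explicit constants through every intermediate step.
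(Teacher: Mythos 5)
Your reduction to $\dim K=d$ and your base case $j=1$ are correct and coincide with what the paper does in that case: \eqref{area1MSB} identifies the left side with $S_1(M_q(K),\cdot)$, and \eqref{meansec} with $k=q$ combined with \eqref{inversion} yields \eqref{croftform} with $a(d,1,q)=[(2\pi)^{2d}c(d,q)]^{-1}$, which indeed matches \eqref{coefficients} after duplication-formula manipulations.

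The gap is in your step from $j=1$ to general $j$. You propose to apply a Crofton identity \emph{internal to} $E$, over flats contained in $E$, which forces you through the relative area measures $S'_j(K\cap E,\cdot)$ on $S^{d-1}\cap L(E)$ and the Fourier operators on that great subsphere, and then back to $S^{d-1}$ via a ``comparison lemma''. That comparison is the lifting $\pi^*_{L(E),-j}$ of Goodey--Kiderlen--Weil (quoted in Remark 4.3 of the paper): it maps measures on $S^{d-1}\cap L(E)$ to measures on $S^{d-1}$, it depends on $E$, and it is \emph{not} an $SO(d)$-intertwining operator on $S^{d-1}$, so it does not ``follow from the multiplier formula \eqref{multipliers}''; worse, you must then commute this $E$-dependent lifting and the $E$-dependent operators $I_p^{L(E)}$ past the outer integration over $A(d,q)$, and that interchange is essentially the theorem itself (Remark 4.4 of the paper warns that Crofton integrals of relative area measures are in general \emph{not} multiples of $S_{d+j-q}$). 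The paper's proof never leaves the ambient sphere: it applies \eqref{meansec} with $k=d+1-j$ \emph{to the section $K\cap E$ itself} (legitimate since $\dim(K\cap E)=q\ge j+1$), writing $S_j(-(K\cap E),\cdot)=c(d,d+1-j)\,I_{d-1}I_j\,S_1(M_{d+1-j}(K\cap E),\cdot)$ with the mean section body taken over \emph{ambient} flats $F\in A(d,d+1-j)$. The fixed operator $I_{d-1}I_j$ pulls out of the $E$-integral; the double integral of $S_1(K\cap E\cap F,\cdot)$ collapses, by motion invariance of the image of $\mu_{d+1-j}\otimes\mu_q$ under $(E,F)\mapsto E\cap F$, to $c\,S_1(M_{q+1-j}(K),\cdot)$; a second application of \eqref{meansec} and \eqref{inversion} finishes, with $c$ computed by running the same double Crofton integral on $V_0(\,\cdot\cap B^d)$. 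I would also not lean on your proposed alternative: that the left side of \eqref{croftform} lies in the span of multiplier transforms of $S_{d+j-q}(-K,\cdot)$ is not a formal consequence of continuity, translation invariance, homogeneity and $SO(d)$-equivariance, and is again essentially the assertion to be proved.
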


\begin{proof} For $q=d$ the assertion of the theorem is true, since $a(d,j,d) I_j I_{d-j} I^*$ is the identity operator. Hence we consider the 
case $q\le d-1$ in the following. 
Moreover, we may assume $\dim K=d$, since the general case can then be obtained by approximation (using the weak continuity of area measures). In this case, for $\mu_q$-almost all $E\in A(d,q)$ such that $K\cap E\not=\emptyset$, we have $\dim (K\cap E) = q\ge j+1$. 
From \eqref{area1MSB} and \eqref{meansec}, we therefore get
\begin{align*}
\int_{A(d,q)}&S_{j}(-(K\cap E),\cdot)\,\mu_{q}(dE)\cr
&=c(d,d+1-j)\int_{A(d,q)}I_{d-1}I_j S_1(M_{d+1-j}(K\cap E),\cdot)\,\mu_{q}(dE)\cr
&=c(d,d+1-j)I_{d-1}I_j\int_{A(d,q)}\int_{A(d,d+1-j)}S_1(K\cap E\cap F),\cdot)\cr
&\quad\times\,\mu_{d+1-j}(dF)\,\mu_{q}(dE)\cr
&=c(d,d+1-j)I_{d-1}I_j\int_{A(d,q+1-j)}S_1(K\cap H,\cdot)\,\psi(dH),
\end{align*}
where $\psi$ is the locally finite image measure (on $A(d,q+1-j)$) of $\mu_{d+1-j}\otimes\mu_{q}$ under the (almost everywhere defined) mapping $(E,F)\mapsto E\cap F$. We have $g(E\cap F) = gE\cap gF$ and so $\psi$ is motion invariant and hence a multiple $c\mu_{q+1-j}$ of $\mu_{q+1-j}$. Therefore,
\begin{align*}
\int_{A(d,q)}&S_{j}(-(K\cap E),\cdot)\,\mu_{q}(dE)\cr
&=c(d,d+1-j)cI_{d-1}I_j\int_{A(d,q+1-j)}S_1(K\cap H,\cdot)\,\mu_{q+1-j}(dH)\cr
&= c(d,d+1-j)cI_{d-1}I_j S_1(M_{q+1-j}(K),\cdot)\\
&= c\frac{c(d,d+1-j)}{(2\pi)^dc(d,q+1-j)}I_jI_{q-j} S_{d+j-q}(K,du), 
\end{align*}
where  we have used \eqref{area1MSB} and \eqref{meansec} again (observe that $\dim K = d\ge d+j-q+1$), as well as \eqref{inversion} for the last equality. The constant $c$ can be determined by an explicit calculation via the Crofton formula for intrinsic volumes. In fact, using 
\cite[Theorem 5.1.1]{SW} and the notation introduced there, we get
\begin{align*}
\int_{A(d,q)}&\int_{A(d,d+1-j)} V_0(E\cap F\cap B^d)\,  \mu_{d+1-j}(dF)\,\mu_{q}(dE)\cr
&=c_{0,d}^{d+1-j,j-1}\int_{A(d,q)}V_{j-1}(E\cap B^d)\, \mu_{q}(dE)\cr
&=c_{0,d}^{d+1-j,j-1}c_{j-1,d}^{q,d+j-1-q}V_{d+j-1-q}(B^d)
\end{align*}
as well as
\begin{align*}
\int_{A(d,q+1-j)}& V_0(H\cap B^d)\,  \mu_{q+1-j}(dH)\cr
&=c_{0,d}^{q+1-j,d+j-1-q}V_{d+j-1-q}(B^d).
\end{align*}
Hence
$$c= \frac{c_{0,d}^{d+1-j,j-1}c_{j-1,d}^{q,d+j-1-q}}{c_{0,d}^{q+1-j,d+j-1-q}} = c^{q,d+1-j}_{d,q+1-j} .
$$
Inserting the latter value from \cite[(5.5)]{SW} and using  \eqref{constant}, we get the value of $a(d,j,q)$. (It should be remarked that formula (5.5) in \cite{SW} is based on formula (5.4) which uses factorials; in the corresponding expression with Gamma functions given at the top of page 172 in \cite{SW}, a factor is missing which however cancels out in our situation.)
\end{proof}

Theorem \ref{surfCrofton} shows that the linear operator $T_{d,j,q}$ used in \eqref{CF5} and \eqref{PKF5} is given by 
$$
T_{d,j,q}=a(d,j,q) I_j I_{q-j} I^*
$$ 
for $1\le j<q\le d$, and $T_{d,j,d}$ is the identity operator.

We next state and prove a more specific version of \eqref{PKF5}. In fact, we just apply Theorem 2.1 to the mapping $\varphi : K\mapsto S_j(K,\cdot)$. It is well-known that $S_j(K,\cdot)$ depends continuously and additively on $K$. The Crofton integrals can be expressed using Theorem 3.1. 

\begin{theorem}\label{PKF6} For $1\le j \le d-1$, $K,M\in {\cal K}$ and Borel sets $A\subset S^{d-1}$, we have
\goodbreak
\begin{align}
\int_{G_d}&S_j(K\cap gM,A)\,\mu(dg)=\binom{d-1}{ j}^{-1}\omega_{d-j} V_d(K)\sigma (A)V_j(M)\label{PKF7}\\
& + \sum_{k=j+1}^{d-1} a(d,j,k)[I_{j}I_{k-j}S_{d+j-k}(-K,\cdot)](A)V_k(M) + S_j(K,A)V_d(M),\nonumber
\end{align}
with constants $a(d,j,k)$ given by \eqref{coefficients}. 
\end{theorem}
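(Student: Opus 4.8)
The plan is to apply Theorem~\ref{thmlocHad} to the mapping $\varphi(K,\cdot)=S_j(K,\cdot)$, which is continuous and additive on $\mathcal{K}$ with $S_j(\emptyset,\cdot)=0$. This reduces \eqref{PKF7} to the evaluation of the $d+1$ Crofton integrals $T_{d,k}S_j(K,\cdot)=\int_{A(d,k)}S_j(K\cap E,\cdot)\,\mu_k(dE)$ from \eqref{abstractCF}, for $k=0,\dots,d$. Three ranges of $k$ are immediate. If $k\le j-1$, then $\dim(K\cap E)\le k<j$ for every $E\in A(d,k)$, so $S_j(K\cap E,\cdot)=0$ and the term drops out. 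If $j+1\le k\le d-1$, the integral is exactly the one evaluated in Theorem~\ref{surfCrofton} (with $q=k$, which is admissible since $1\le j<k\le d$), so it equals $a(d,j,k)\,I_jI_{k-j}S_{d+j-k}(-K,\cdot)$; these terms form the middle sum of \eqref{PKF7}. Finally $A(d,d)=\{\R^d\}$ carries unit mass, so $T_{d,d}S_j(K,\cdot)=S_j(K,\cdot)$, which is the last summand $S_j(K,A)V_d(M)$.

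The remaining value $k=j$ is where the actual work lies, since Theorem~\ref{surfCrofton} is not available for sections of dimension $j$. I would first record the shape of $S_j(L,\cdot)$ for a body $L$ with $\dim L\le j$ lying in an affine $j$-flat with direction space $U\in G(d,j)$. By translation invariance of area measures we may assume $L\subset U$; then $S_j(L,\cdot)$ is concentrated on the subsphere $S^{d-1}\cap U^\bot$ and, being invariant under all rotations of $U^\bot$ acting trivially on $U$, is a constant multiple of the $(d-j-1)$-dimensional Hausdorff measure on $S^{d-1}\cap U^\bot$. Matching coefficients of powers of $\varepsilon$ in the Steiner formulas for $V_d(L+\varepsilon B^d)$ and for the surface area of $L+\varepsilon B^d$ yields $S_j(L,S^{d-1})=\binom{d-1}{j}^{-1}\omega_{d-j}V_j(L)$, so altogether $S_j(L,\cdot)=\binom{d-1}{j}^{-1}V_j(L)\,\mathcal{H}^{d-j-1}|_{S^{d-1}\cap U^\bot}$. (Both sides vanish when $\dim(K\cap E)<j$, so, unlike in the proof of Theorem~\ref{surfCrofton}, no reduction to $\dim K=d$ is needed here.)

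Inserting this with $L=K\cap E$ and disintegrating $\mu_j$ as an integral over directions $U\in G(d,j)$ (against the invariant probability measure $\nu_j$) of Lebesgue measure on $U^\bot$, Fubini's theorem gives $\int_{U^\bot}V_j(K\cap(U+x))\,\mathcal{H}^{d-j}(dx)=V_d(K)$ for every $U$ --- equivalently, this is the case $q=j$ of the Crofton formula \eqref{CF}. Hence
\[
T_{d,j}S_j(K,\cdot)=\binom{d-1}{j}^{-1}V_d(K)\int_{G(d,j)}\mathcal{H}^{d-j-1}|_{S^{d-1}\cap U^\bot}\,\nu_j(dU),
\]
and the last integral is a rotation-invariant measure on $S^{d-1}$ of total mass $\mathcal{H}^{d-j-1}(S^{d-1}\cap U^\bot)=\omega_{d-j}$, hence equals $\omega_{d-j}\,\sigma$. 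This is the first summand of \eqref{PKF7}, and assembling the four cases finishes the proof.

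The main obstacle is precisely the $k=j$ contribution: it is the one Crofton integral not handed to us by Theorems~\ref{thmlocHad} and~\ref{surfCrofton}, and treating it cleanly requires both the explicit description of the top area measure of a lower-dimensional body and enough control of the normalization of $\mu_j$ to recover the constant $\binom{d-1}{j}^{-1}\omega_{d-j}$ rather than merely an unspecified multiple of $V_d(K)\sigma$.
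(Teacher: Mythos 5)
Your proof is correct and follows essentially the same route as the paper: apply Theorem~\ref{thmlocHad} to $\varphi=S_j$ and evaluate the Crofton integrals $T_{d,k}S_j(K,\cdot)$ case by case, with vanishing for $k<j$, Theorem~\ref{surfCrofton} for $j<k<d$, the identity for $k=d$, and the explicit rotation-invariance/Cavalieri computation for $k=j$. You simply supply more detail than the paper does on the $k=j$ term (which the paper only sketches) and justify the $k=d$ term directly from the normalization of $\mu_d$ rather than via the inversion formula \eqref{inversion}; both variants are fine.
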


\begin{proof} The assertion of the theorem follows by combining Theorem \ref{thmlocHad} and Theorem \ref{surfCrofton}. In addition, we only have to observe that the Crofton integrals
$$
T_{d,k}S_j(K,\cdot) = \int_{A(d,k)} S_j(K\cap E, \cdot)\,\mu_k(d E) , \quad k=0,\dots, d,
$$
which appear in Theorem 2.1, vanish for $k=0,...,j-1$ and that $S_j(K\cap E,\cdot)$, for $E\in A(d,j)$ is proportional to $V_j(K\cap E)$ times the (normalized) spherical Lebesgue measure $\sigma_{E^\bot}$ in $E^\bot$. Integration over all $E$ parallel to the same subspace $L$ then yields
\begin{align*}
T_{d,j}S_j(K,\cdot) &= \binom{d-1}{ j}^{-1}\omega_{d-j} V_d(K)\int_{G(d,j)} \sigma_{L^\bot}\,\nu_k(d L)\\
& =\binom{d-1}{ j}^{-1} \omega_{d-j} V_d(K)\, \sigma .
\end{align*}
Moreover
$$
T_{d,d}S_j(K,\cdot) = a(d,j,d)I_{j}I_{d-j}S_{j}(-K,\cdot) = S_j(K,\cdot)
$$
by the inversion formula \eqref{inversion}.
\end{proof}

\section{Some special cases, variations, and consequences}\theoremstyle{definition}

In this section, we collect some remarks on the kinematic formulas including variations and applications.

\begin{rem} We emphasize the special case $j=d-1$ of \eqref{PKF7}. It yields
\begin{equation}\label{d-1-case}
\int_{G_d}S_{d-1}(K\cap gM,A)\,\mu(dg)=2V_d(K)\sigma (A)V_{d-1}(M)
 + S_{d-1}(K,A)V_d(M) .
\end{equation}
This formula has a translative counterpart, namely
\begin{equation}\label{d-1-casetrans}
\int_{\R^d}S_{d-1}(K\cap (M+x),A)\,\lambda(dx)=V_d(K)S_{d-1}(M,A) + S_{d-1}(K,A)V_d(M) ,
\end{equation}
where $\lambda$ denotes the Lebesgue measure in $\R^d$. From \eqref{d-1-casetrans}, the kinematic formula \eqref{d-1-case} follows directly by integrating with the invariant probability measure $\nu$ over the rotation group $SO(d)$ and using Fubini's theorem,
\begin{align*}
\int_{SO(d)}S_{d-1}(\rho M,\cdot)\, \nu(d\rho)&=\int_{SO(d)}\int_{S^{d-1}}\mathbf{1}\{\rho u\in\cdot\}S_{d-1}(M,du)\, \nu(d\rho)\\
&=\int_{S^{d-1}} \sigma(\cdot) \, S_{d-1}(M,du)\\
&=2V_{d-1}(M) \sigma(\cdot) .
\end{align*}
Formula \eqref{d-1-casetrans} makes several appearances; see, for example, \cite[(3.4)]{W97} or \cite[(7.17)]{SW2000}. It can be deduced from a general (and simple) translation formula for $\sigma$-finite measures on $\R^d$ (Theorem 5.2.1 in \cite{SW}) but also follows from more general results on mixed functionals from Translative Integral Geometry (see \cite[Section 6.4]{SW}). 
\end{rem}

\begin{rem}
We can obtain a variant of \eqref{croftform} from \eqref{inversion} in the form
\begin{equation}\label{croftform2}
\int_{A(d,q)}I_{d+j-q}I_{d-j}S_j(K\cap E,\cdot)\,\mu_{q}(dE)=a'(d,j,q)S_{d+j-q}(-K,\cdot),
\end{equation}
where $a'(d,j,q)=a(d,j,k)(2\pi)^{2d}$, 
which yields a Crofton-type representation of $S_{d+j-q}(-K,\cdot)$.
\end{rem}

\begin{rem}\label{lifting}
Although we mentioned in the introduction that a Crofton formula with integrand $S'_j(K\cap E,\cdot)$, the area measure of $K\cap E$ calculated in the subspace $L(E)$, is not possible in a direct manner, we can obtain a version of \eqref{croftform} with $S_j(K\cap E,\cdot)$ replaced by $S'_j(K\cap E,\cdot)$ if we use an appropriate lifting for measures from the unit sphere in $L(E)$ to  $S^{d-1}$ (see \cite{GKW}). Namely, it was shown in \cite[Theorem 6.2]{GKW} that 
$$
S_j(K\cap E,\cdot) = c(d,q,j) \pi^*_{L(E),-j} S_j'(K\cap E,\cdot)
$$
with a given constant $c(d,q,j)$.
\end{rem}

\begin{rem}
It is, of course, possible to consider the Crofton-type integral
\begin{equation}\label{relCF}
\int_{A(d,q)} S_{j}'(K\cap E,A\cap L(E) ) \,\mu_q(dE)
\end{equation}
for $j\in\{ 1,\dots,q-1\}, K\in{\cal K}$ and $A\subset S^{d-1}$. As we argued in the introduction, the result will not be a multiple of $S_{d+j-q}(K,A)$. In \cite{Rataj99}, translative and kinematic Crofton formulas of this kind are derived (not only for area measures, but more generally for support measures). In general, the integrals cannot be expressed as a simple transform of an area (or support) measure but are more complicated curvature expressions of $K$.

In the case $j=q-1$, the translative formula for $K\in\cal K$ and $L\in G(d,q)$ gives rise to the {\it relative Blaschke section body} $B_L(K)\subset L$ of $K$, which was defined in \cite{GKW98},
\begin{equation}\label{relCF2}
S_{q-1}'(B_L(K),\cdot) = \int_{L^\perp} S_{q-1}'((K+x)\cap L,\cdot ) \,\lambda_{L^\perp}(dx) 
\end{equation}
(here, $\lambda_M$ denotes the Lebesgue measure in $M\in G(d,i)$)
and \eqref{relCF} yields the area measure of the {\it Blaschke section body} $B_q(K)$,
$$
S_{d-1}(B_q(K),A) = \int_{A(d,q)} S_{q-1}'(K\cap E,A\cap L(E) ) \,\mu_{q}(dE) .
$$
Using the spherical projection $\pi_{L,1}$ from \cite{GKW}, one obtains
$$
S_{q-1}'(B_L(K),\cdot) = \pi_{L,1} S_{d-1}(K,\cdot)
$$
and therefore
\begin{align*}
S_{d-1}(B_q(K),\cdot) &= \int_{G(d,q)} \pi_{L,\infty}^*\pi_{L,1} S_{d-1}(K,\cdot)\, \nu_q(dL)\\
&=\pi^{(q)}_{\infty,1} S_{d-1}(K,\cdot) 
\end{align*}
(see \cite[Theorem 3.1]{GKW98}, \cite[(3.4)]{Rataj99} and \cite[(6.7)]{GKW}). Here, $\pi_{L,\infty}^*$ is the trivial lifting (of a measure) from $L\cap S^{d-1}$ to $S^{d-1}$ and $\pi^{(q)}_{\infty,1}$ is the corresponding {\it mean lifted projection} (\cite[Definition 7.3]{GKW}).
\end{rem}

\begin{rem}
The Crofton formula \eqref{croftform} also yields a connection between the Fourier operators $I_p$ and spherical projections and liftings. In fact, \eqref{relCF2}
is easily seen to be equivalent to
$$
S_{q-1}(B_L(K),\cdot) = \int_{L^\perp} S_{q-1}(K\cap (L+x),\cdot ) \, \lambda_{L^\perp}(dx) 
$$
(for example, by using the lifting from Remark \ref{lifting}). Integrating over the rotation group $SO(d)$, we obtain the {\it Blaschke section body (of the second kind)} $\tilde B_q(K)$, defined in \cite{Kiderlen} and studied further in \cite[Example 4]{GKW},
\begin{align*}
S_{d-1}( \tilde B_q(K),\cdot) &= \int_{SO(d)}\int_{L^\perp} S_{q-1}(K\cap \vartheta(L+x)) \,\lambda_{L^\perp}(dx) \,\nu(d\vartheta)\\
&= \int_{A(d,q)} S_{q-1}(K\cap E) \,\mu_q(dE) \\
&= a(d,q-1,q) I_{q-1}I_1 S_{d-1}(-K,\cdot) ,
\end{align*}
here we used \eqref{croftform}. A comparison of this result with a formula on p. 28 of \cite{GKW} now shows that
$$
I_{q-1}I_1I^\ast = a(d,q) \pi^{(q)}_{1,1-q}
$$
with 
$$a(d,q)= \binom{d-1}{ k-1}^{-1}a(d,q-1,q)$$ 
and where the mean lifted projection $\pi^{(q)}_{1,1-q}$ is defined in \cite[(7.1)]{GKW}. From results of Kiderlen \cite{Kiderlen}, it follows that the operator $\pi^{(q)}_{1,1-q}$ is injective on centered measures on the sphere. This is now also apparent from the injectivity properties of the $I_p$ operators. More generally, from Theorem \ref{surfCrofton} we see that, for each $1\le j<q\le d$, the Crofton integral 
$$
\int_{A(d,q)} S_{j}(K\cap E,\cdot ) \,\mu_q(dE)
$$
determines a body $K$ of dimension $\ge d+j-q+1$ uniquely. Kiderlen applied  the case $j=q-1$ to a stereological problem for random sets $Z$ or particle processes $X$ in $\R^d$, namely for the estimation of the mean normal measure of $Z$ and $X$ from measurements in $q$-dimensional sections. We now see, that a similar stereological application is possible, using lower order area measures of sections of $Z$ or $X$.
\end{rem}

\begin{rem} If $f_s\in H^d_s$, then 
$$
I_j I_{q-j} f_s=\lambda_s(d,j)\lambda_s(d,q-j) f_s=(-1)^sb_s(d,j,q) f_s
$$
with 
$$
b_s(d,j,q)=2^q\pi^d\frac{\Gamma\left(\frac{s+j}{2}\right)\Gamma\left(\frac{s+q-j}{2}\right)}{\Gamma\left(\frac{s+d-j}{2}\right)\Gamma\left(\frac{s+d-q+j}{2}\right)}.
$$
Hence, in this case, Theorem \ref{surfCrofton} becomes
\begin{align*}
&\int_{A(d,q)}\int_{S^{d-1}}f_s(u)\, S_j(K\cap E,du)\, \mu_q(dE)\\
&\qquad =a_s(d,j,q)\int_{S^{d-1}} f_s(u)\, S_{d+j-q}(K,du),
\end{align*}
where $a_s(s,j,q)=a(d,j,q)b_s(d,j,q)$. 

As indicated in \cite{BH}, this version of Theorem \ref{surfCrofton} immediately implies a Crofton formula for translation invariant tensor valuations, stated as Corollary 6.1 in \cite{BH} (and hence also \cite[Theorem 3]{BH}). 
To see this, note that the translation invariant tensor valued valuation $\Psi_{k,s}$, defined in \cite[Proposition 4.16]{BH}, can 
be written in the form
$$
\Psi_{k,s}(K)=\frac{\Gamma\left(\frac{d-k+s}{2}\right)}{\Gamma\left(\frac{d-k}{2}\right)}\frac{1}{\pi^{\frac{s}{2}}s!}\binom{d-1}{k}\frac{1}{\omega_{d-k}}
\int_{S^{d-1}}\bar f_s(u)\, S_k(K,du)
$$
with a function $\bar f_s$ on $S^{d-1}$ which takes values in the vector space of symmetric tensors of rank $s$ and whose coefficients with respect 
to a  basis of this vector space are spherical harmonics of degree $s$; see the proof of \cite[Corollary 4.17]{BH}.
\end{rem}

\begin{rem}
In \cite[Theorem 1]{BH}, the Fourier transforms $\mathbb{F}$ of spherical valuations have been determined. This result can now be expressed 
in terms of the operators $I_p$. For $K\in \mathcal{K}$, $f\in L^2(S^{d-1})$ and $k\in \{1,\ldots,d-1\}$, let 
$$
\bar\mu_{k,f}(K)=\binom{d-1}{k}(2\pi)^{\frac{k}{2}}\int_{S^{d-1}}f(u)\, S_k(K,du),
$$
which is a convenient renormalization of the spherical valuation $\mu_{k,f}$ introduced in \cite{BH}. Then \cite[Theorem 1]{BH} 
can be expressed in the equivalent form 
$$
\mathbb{F}(\bar\mu_{k,f})=(2\pi)^{-\frac{d}{2}}\, \bar\mu_{d-k,I_k f},
$$
for $f\in H^d_s$, which then extends to arbitrary $f\in L^2(S^{d-1})$. Hence, the Fourier transform of the spherical valuation associated with a function is up to the factor $(2\pi)^{-\frac{d}{2}}$ the spherical valuation of an $I_p$ transform of that function.
\end{rem}

\section{Orthogonality aspects}
It is a classical result of Fourier analysis that, if $f$ is an even function on $\mathbb R^d$ and has appropriate integrability properties, then its Fourier transform $\hat f$ satisfies, for any $k=1,\dots,d-1$ and any $L\in G(d,d-k)$,
$$
(2\pi)^{k}\int_{L} f(x)\,\lambda_L (dx)=\int_{L^\perp}\hat f(x)\,\lambda_{L^\bot} (dx),
$$
see, for example, \cite[Lemma 3.24]{Kol}. 
We will be interested in a spherical analogue of this result, due to Koldobsky \cite[Lemma 3.25]{Kol}. In our notation, Koldobsky's result is
%\begin{multline}
\begin{equation}\label{spherorthog}
\int_{S^{d-1}\cap H^\perp}(I_{d-k}f)(u)\,\sigma_{H^\perp}(du)\\=\pi^{\frac{d}{2}}2^{d-k}\displaystyle\frac{\Gamma(\frac{d-k}{2})}{\Gamma(\frac{k}{2})}\int_{S^{d-1}\cap H}f(u)\,\sigma_H(du)
\end{equation}
%\end{multline}
for all even $f\in C^\infty(S^{d-1})$ and $H\in G(d,k)$; see also \cite[Theorem 2.7]{Mil}. Here and in the following, $\sigma_H$ denotes the normalized (probability measure) spherical Lebesgue measure on $S^{d-1}\cap H$.  It is our intention, in this section, to see how (\ref{spherorthog}) is a consequence of Theorem \ref{surfCrofton} and to use this theorem to find generalizations.

For this it will be convenient to recall the notion of Radon transforms between Grassmann manifolds, see, for example, \cite{GSW}. For $1\le i,j\le d-1$, we denote by $R_{i,j}:C(G(d,i))\to C(G(d,j))$ the Radon transform given by
\begin{align*}
(R_{i,j}f)(E)&=\int_{G(E,i)}f(L)\,\nu_i^E( dL)\qquad \text{for }E\in G(d,j),\ f\in C(G(d,i));
\intertext{where}
G(E,i)&=\begin{cases}\{L\in G(d,i):L\subset E\}&\text{if }i<j;\\
\{L\in G(d,i):L\supset E\}&\text{if }i>j;\\
\{E\}&\text{if }i=j.\end{cases}
\end{align*}
and the integration is with respect to the invariant probability measure $\nu_i^E$ on $G(E,i)$. This transform can be extended to $L^2(G(d,i))$ since it intertwines the action of the rotation group and therefore acts as a multiple of the identity on the irreducible invariant subspaces. In this case,  it is defined by means of its action on these subspaces. Using this notation, Koldobsky's result (\ref{spherorthog}) becomes
\begin{equation}\label{Grassorthog}
(R_{1,d-k}I_{d-k}f)(H^\perp)=c_{d,k}(R_{1,k}f)(H)
\end{equation}
with
$$
c_{d,k}=\pi^{\frac{d}{2}}2^{d-k}\displaystyle\frac{\Gamma(\frac{d-k}{2})}{\Gamma(\frac{k}{2})}
$$
for $f\in C^\infty(G(d,1)),\ H\in G(d,k)$. Here, we identify functions on $G(d,1)$ with even functions on $S^{d-1}$. 
In the following, for a function $f$ on $G(d,i)$, we will denote by $f^\perp$ the function on $G(d,d-i)$ given by $f^\perp(L)=f(L^\perp)$. Using this notation, it is well known that, for even $f\in C^\infty(S^{d-1}),\ 
(R_{i,j}f)^\perp=R_{d-i,d-j}f^\perp$ and, moreover, the special case $k=d-1$ of \eqref{Grassorthog} states
\begin{equation}\label{speccase}
I_1f=\frac{2\pi^{\frac{d+1}{2}}}{\Gamma\left(\frac{d-1}{2}\right)}\,(R_{1,d-1}f)^\perp=
\frac{2\pi^{\frac{d+1}{2}}}{\Gamma\left(\frac{d-1}{2}\right)}\, R_{d-1,1}f^\perp.
\end{equation}
Furthermore, for $1\le i\le j\le k\le d-1$, we have
$$
R_{i,k}=R_{j,k}R_{i,j}\qquad\text{and}\qquad R_{d-i,d-k}=R_{d-j,d-k}R_{d-i,d-j}.
$$
For $j=2,\dots,d-2$, the operators $R_{j,1}:C^\infty(G(d,j))\to C^\infty(G(d,1))$ are not injective, however, their restrictions to the range of $R_{1,j}$ (or to the range of a composition $R_{k,j}R_{1,k}$, for any $k=j+1,\dots,d-1$) are injective. Thus, 
(\ref{Grassorthog}) is equivalent to
$$
R_{d-k,1}R_{1,d-k}I_{d-k}f=c_{d,k}R_{d-k,1}R_{d-1,d-k}f^\perp=c_{d,k}R_{d-1,1}f^\perp,
$$
or (using \eqref{speccase})
\begin{equation}\label{Grassorthog2}
R_{d-k,1}R_{1,d-k}f=\tilde c_{d,k}I_1I_kf
\end{equation}
with
$$
\tilde c_{d,k}=2^{-(k+1)}\pi^{-d}\frac{\Gamma\left(\frac{d-k}{2}\right)
\Gamma\left(\frac{d-1}{2}\right)}{\Gamma\left(\frac{k}{2}\right)\Gamma\left(\frac{1}{2}\right)}
$$
for even $f\in C^\infty(S^{d-1})$ and $k=1,\dots,d-1$.
The case $k=d-1$ is just the statement that, for even functions, $I_{d-1}=(2\pi)^d I_1^{-1}$. For
$k = 1$, we obtain the previously noted connection between $I_1$ and $R_{1,d-1}$, see \eqref{speccase}. 

We will now prove a more general formula which contains \eqref{Grassorthog2} as the case 
%This, in turn, is equivalent to the special cases $j=1,\ q=d-k+1$ and 
$q=j+1=k+1$.

\begin{theorem}\label{Kolspherical} For $1\le j<q<d$ and even $g\in C^\infty(S^{d-1})$, we have 
\begin{equation}\label{Grassorthog3}
R_{1,q-j}I_{q-j}I_jg=c_{d,q,j}R_{d-j,q-j}R_{1,d-j}g 
\end{equation}
with
$$
c_{d,q,j} = 2^q\pi^d\frac{\Gamma(\frac{j}{2})\Gamma(\frac{q-j}{2})}{\Gamma(\frac{d-j}{2})\Gamma(\frac{d-q+j}{2})}.
$$
\end{theorem}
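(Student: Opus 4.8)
The plan is to deduce \eqref{Grassorthog3} from the even-function relation \eqref{Grassorthog2} established above, by ``shifting'' the base index from $1$ up to $q-j$. The tools are the composition rules for the Radon transforms $R_{i,j}$, the inversion formula \eqref{inversion}, and the injectivity of $R_{q-j,1}$ on the range of $R_{1,q-j}$. Throughout, recall that the operators $I_p$ act diagonally on spherical harmonics, hence mutually commute and preserve parity, and that $I^\ast$ is the identity on even functions. Observe also that $1\le q-j\le d-2$ and $2\le d-j\le d-1$, so all the Radon transforms and all the operators $I_p$ occurring below are of admissible type, and \eqref{Grassorthog2} is available both with $k=j$ and with $k=d-q+j$.

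First I would apply the projecting transform $R_{q-j,1}$ to the right-hand side of \eqref{Grassorthog3}. Since $R_{q-j,1}R_{d-j,q-j}=R_{d-j,1}$ (staged projection $d-j\to q-j\to 1$), the composition rule together with \eqref{Grassorthog2} for $k=j$ gives
\[
 R_{q-j,1}\bigl(R_{d-j,q-j}R_{1,d-j}g\bigr)=R_{d-j,1}R_{1,d-j}g=\tilde c_{d,j}\,I_1I_jg .
\]
Next I would apply $R_{q-j,1}$ to the left-hand side: using \eqref{Grassorthog2} with $d-k=q-j$, i.e.\ $k=d-q+j$, for the even $C^\infty$-function $I_{q-j}I_jg$, we obtain
\[
 R_{q-j,1}\bigl(R_{1,q-j}I_{q-j}I_jg\bigr)=\tilde c_{d,d-q+j}\,I_1I_{d-q+j}I_{q-j}I_jg .
\]
The decisive point is that $d-q+j=d-(q-j)$, so \eqref{inversion} yields $I_{d-(q-j)}I_{q-j}=(2\pi)^dI^\ast$; since the $I_p$ commute and $I^\ast$ fixes even functions, the last display collapses to $\tilde c_{d,d-q+j}(2\pi)^d\,I_1I_jg$.

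Comparing the two computations, $R_{q-j,1}$ sends both $\frac{1}{\tilde c_{d,d-q+j}(2\pi)^d}R_{1,q-j}I_{q-j}I_jg$ and $\frac{1}{\tilde c_{d,j}}R_{d-j,q-j}R_{1,d-j}g$ to $I_1I_jg$. Both of these functions lie in the range of $R_{1,q-j}$: the first trivially, and the second because $R_{1,d-j}=R_{q-j,d-j}R_{1,q-j}$, while the equivariant self-map $R_{d-j,q-j}R_{q-j,d-j}$ of $C^\infty(G(d,q-j))$ preserves the range of $R_{1,q-j}$ (as $L^2(G(d,q-j))$ is multiplicity-free, this operator acts as a scalar on each isotypic component, hence preserves every $SO(d)$-invariant subspace). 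Since $R_{q-j,1}$ is injective on the range of $R_{1,q-j}$, the two expressions coincide; this is \eqref{Grassorthog3} with $c_{d,q,j}=\tilde c_{d,d-q+j}(2\pi)^d/\tilde c_{d,j}$. Inserting the explicit values of $\tilde c_{d,\cdot}$ and simplifying the resulting quotient of Gamma functions gives $c_{d,q,j}=2^q\pi^d\Gamma(j/2)\Gamma((q-j)/2)/\bigl(\Gamma((d-j)/2)\Gamma((d-q+j)/2)\bigr)$, as asserted. For $q=j+1$ one has $q-j=1$ and $R_{q-j,1}=R_{1,q-j}=\mathrm{id}$, so the argument degenerates and we just recover \eqref{Grassorthog2}; since the latter is an incarnation of Koldobsky's relation \eqref{spherorthog}, which the discussion above links to the Crofton formula \eqref{croftform}, the result is ultimately an application of Theorem~\ref{surfCrofton}.

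The step I expect to be the main obstacle is spotting this bootstrap: deciding to apply the single transform $R_{q-j,1}$, and recognizing that combining \eqref{Grassorthog2} in its two instances $k=j$ and $k=d-q+j$ produces exactly the extra factor $I_{d-(q-j)}I_{q-j}$, which \eqref{inversion} reduces to the scalar $(2\pi)^d$ on even functions. The rest is bookkeeping: verifying that all indices remain admissible, that both relevant functions lie in the range of $R_{1,q-j}$ (so that the non-injective transform $R_{q-j,1}$ may legitimately be cancelled against it), and the routine Gamma-function evaluation of $c_{d,q,j}$.
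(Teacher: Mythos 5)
Your argument is correct, and the details check out: $R_{q-j,1}R_{d-j,q-j}=R_{d-j,1}$ and $R_{1,d-j}=R_{q-j,d-j}R_{1,q-j}$ are legitimate instances of the composition rules, both sides of \eqref{Grassorthog3} lie in the range of $R_{1,q-j}$ (so the non-injective $R_{q-j,1}$ may be cancelled), and $\tilde c_{d,d-q+j}(2\pi)^d/\tilde c_{d,j}$ does simplify to the stated $c_{d,q,j}$. However, this is a genuinely different route from the paper's proof --- indeed it is essentially the derivation the authors themselves sketch in the paragraph immediately \emph{following} the theorem, where they note that \eqref{Grassorthog2} already yields \eqref{Grassorthog3} via the injectivity of the Radon transforms, so that ``the difference between the two is mostly in the formulation.'' The paper's actual proof is geometric: it applies the Crofton formula of Theorem~\ref{surfCrofton} to a convex body $K$ with $\dim K=d-q+j$, evaluates the Crofton integral of $S_j(K\cap E,\cdot)$ directly via the Cavalieri principle and a Blaschke--Petkantschin formula to produce $c\,V_{d-q+j}(K)(R_{d-j,q-j}R_{1,d-j}g)(L(K)^\perp)$, and compares this with the right-hand side of \eqref{croftform}, which for such a low-dimensional body reduces to a multiple of $(R_{1,q-j}I_jI_{q-j}g)(L(K)^\perp)$ since $S_{d-q+j}(K,\cdot)$ is proportional to $\sigma_{L(K)^\perp}$. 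What the longer geometric route buys is precisely what your bootstrap cannot deliver: your argument consumes Koldobsky's relation \eqref{spherorthog}/\eqref{Grassorthog2} as an external input and is intrinsically confined to even functions (you use $I^\ast=\mathrm{id}$ and the evenness hypothesis of \eqref{Grassorthog2} in an essential way), whereas the paper's Crofton-formula proof is independent of Koldobsky's lemma and, upon replacing the low-dimensional body by a full-dimensional polytope, extends to centered non-even functions --- which is the whole point of Section~5 and leads to the asymmetric orthogonality relation \eqref{asymm}. So your proof is a valid and efficient verification of the theorem as stated, but it would not serve the paper's declared purpose of deriving the orthogonality relation from integral geometry and then genuinely generalizing it.
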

\begin{proof}
We will require some more notation for the proof. For an affine space $E\in A(d,q)$ and convex body $K\subset \mathbb R^d$, we denote by $L(E)$ and $L(K)$, respectively, the subspace parallel to $E$ and the subspace parallel to the affine hull of $K$. We will also need the subspace determinant $[L_1,L_2]$ for linear spaces $L_1,L_2$ and the generalized cosine $\langle L_1,L_2\rangle$ when the spaces have the same dimension. The definitions can be found in \cite[pages 597--598]{SW}. In particular, we note that $|\langle L_1,L_2\rangle|=[L_1,L_2^\perp]=[L_1^\perp,L_2]$.

Let $1\le j<q\le d-1$ and let $K$ be a convex body with $\dim K=d-q+j$. We, first, note that, for $\mu_q$-almost all $E\in A(d,q)$ with $K\cap E\ne\emptyset$, we have $\dim(K\cap E)=j$.
 Thus, for such an $E$ and any even function $f\in C(S^{d-1})$, we get
$$
\int_{S^{d-1}}f(u)\,S_j(K\cap E,du)=c_{1}V_j(K\cap E)(R_{1,d-j}f)((L(K)\cap L(E))^\perp)
$$
with some constant $c_1=c_1(d,j)$. All constants $c_2,c_3,\ldots$ in the proof will only depend on $d,j$ and possibly on $q$. 

The Cavalieri principle then gives, for $\nu_{q}$-almost all $L\in G(d,q)$,
$$
\int_{L^\perp}V_j(K\cap(L+x))\,\lambda_{L^\perp}(dx)=|\langle (L\cap L(K))^\perp\cap  L(K),L^\perp\rangle| V_{d-q+j}(K).
$$
%for $E\in A(d,q)$ we will put $G_E=(L_E\cap L(K))^\perp\cap  L(K)\in G(L(K),d-q)$.
The Blaschke-Petkantschin formulas (see, for example, \cite[Theorem 7.2.6]{SW}) show that,  for fixed $F\in G(d,d-q+j)$ and any $h\in C(G(d,q))$,
\begin{equation*}
\int_{G(d,q)}h(L)\,\nu_{q}(dL)\\
=c_2\int_{G(F,j)}\int_{G(M,q)}h(L)[L,F]^j\,\nu_{q}^M(dL)\,\nu_j^F(dM).
\end{equation*}
 Consequently, for even $g\in C^\infty(S^{d-1})$,
\begin{align*}
\int_{A(d,q)}&\int_{S^{d-1}}g(u)\,S_j(K\cap E,du)\,\mu_{q}(dE)\\
&=c_{1}\int_{G(d,q)}\biggl((R_{d-1,j}g^\perp)(L(K)\cap L)\\
&\hskip20pt\times\int_{L^\perp}V_j(K\cap(L+x))\,\lambda_{L^\perp}(dx)\biggr)\,\nu_{q}(dL)\\[5pt]
&=c_{1}V_{d-q+j}(K)\int_{G(d,q)}\biggl(|\langle(L(K)\cap L)^\perp\cap L(K),L^\perp\rangle|\\
&\hskip20pt \times(R_{d-1,j}g^\perp)(L(K)\cap L)\biggr)\,\nu_{q}(dL)\\[5pt]
&= c_{3}V_{d-q+j}(K)\int_{G(L(K),j)}\int_{G(M,q)} (R_{d-1,j}g^\perp)(L(K)\cap L)\\
&\hskip20pt \times  [L,L(K)] [L,L(K)]^j\, 
\nu_{q}^M(dL) \,\nu_j^{L(K)}(dM)\\[5pt]
&= c_{3}V_{d-q+j}(K)\int_{G(L(K),j)} (R_{d-1,j}g^\perp)(M)\\
&\hskip20pt\times\int_{G(M,q)}[L,L(K)]^{j+1}\, 
\nu_{q}^M(dL) \,\nu_j^{L(K)}(dM)\\[5pt]
&=c_{4}V_{d-q+j}(K)(R_{j,d-q+j}R_{d-1,j}g^\perp)(L(K))\\
&=c_{4}V_{d-q+j}(K)(R_{d-j,q-j}R_{1,d-j}g)(L(K)^\perp).
\end{align*}
Finally, note that
$$
S_{d-q+j}(K,\cdot)=S_{d-q+j}(-K,\cdot)=\binom{d-1}{j}^{-1}\omega_{q-j}V_{d-q+j}(K)\sigma_{L(K)^\perp},
$$
and so the right-hand side of the Crofton formula (Theorem \ref{surfCrofton}), applied to the even function $g$, yields
\begin{align*}
V_{d-q+j}&(K)(R_{1,q-j}I_jI_{q-j}g)(L(K)^\perp)\\
&=V_{d-q+j}(K)\int_{S^{d-1}}(I_jI_{q-j}g)(u)\,\sigma_{L(K)^\perp}(du)\\
&=\binom{d-1}{j}\omega_{q-j}^{-1}\int_{S^{d-1}}(I_jI_{q-j}g)(u)\, S_{d-q+j}(K,du)\\
&=c_5  \int_{A(d,q)}\int_{S^{d-1}} g(u)\, S_{j}(K\cap E,du)\, \mu_q(dE)\\
&=c_{d,j,q}V_{d-q+j}(K)(R_{d-j,q-j}R_{1,d-j}g)(L(K)^\perp),
\end{align*}
and hence $R_{1,q-j}I_{q-j}I_j g=R_{d-j,q-j}R_{1,d-j}g$.  
The value of $c_{d,q,j}$ comes from (3.1) and letting $g$ be constant. 
\end{proof}

Although (\ref{Grassorthog3}) appears to be a generalization of (\ref{Grassorthog2}), the difference between the two is mostly in the formulation. To see this, note that (\ref{Grassorthog2}) can be rephrased as saying that any two members of the family of operators $R_{d-k,1}R_{1,d-k}I_{d-k}$, for $k=1,\dots,d-1$, are multiples of each other. Thus, for $1\le j<q<d$, (\ref{Grassorthog2}) gives
$$
R_{q-j,1}R_{1,q-j}I_{q-j}=cR_{d-j,1}R_{1,d-j}I_{d-j}=cR_{q-j,1}R_{d-j,q-j}R_{1,d-j}I_{d-j}
$$
and so the injectivity results mentioned above (parenthetically) yield (\ref{Grassorthog3}). The purpose of Theorem \ref{Kolspherical} is not so much to find a generalization of (\ref{Grassorthog2}) as to find a proof (in this case using Crofton formulas) which allows extension to functions which are not necessarily even. This will be achieved by replacing the low dimensional body $K$ in the proof of Theorem \ref{Kolspherical} with a full dimensional polytope. The nature of the surface area measures of polytopes will lead us to consider the asymmetric $p$-cosine transform. For a subspace $M\in G(d,j)$ and appropriately chosen $p\in\mathbb Z$, this transform $H_p^M:C^\infty(S^{d-1}\cap M)\to C^\infty(S^{d-1}\cap M)$ is defined by
$$
(H_p^Mf)(u)=\int_{S^{d-1}\cap M\cap u^+}\langle u,v\rangle^pf(v)\sigma_M(dv),\qquad u\in S^{d-1}\cap M,
$$
here, $u^+$ denotes the half space comprising those $x\in\mathbb R^d$ with $\langle x,u\rangle>0$. In view of the possibility of negative values of $p$, care has to be taken over integrability issues in the above formula. The asymmetric (and symmetric) $p$-cosine transforms have been used in many different contexts, most recently it has emerged as an important tool in the study  of non-symmetric convex and star-shaped bodies. In our situation, it arises through the study of surface area measures of polytopes and provides a link to the symmetric case via the connection with the $I_p$ operators. This connection is clear in the work of Gelfand and Shilov \cite{GelShi} on Fourier transforms of homogeneous distributions. In particular, it follows from their work that, for even $p\ge 0$ and odd $f\in C^\infty(S^{d-1}),\ H_pf=cI_{-p}f$ (here we suppress the superscript $M$ in case $M=\mathbb R^d$); see, for example \cite{GYY}. For non-integer values of $p$, this connection is easy to establish because the poles of the Gamma function are avoided. Also the injectivity properties of $H_p$ can usually be deduced from those of the Fourier transform. The eigenspaces are again the spherical harmonics and the eigenvalues are known, see, for example Rubin \cite{Rub1, Rub2}. Recent applications to the study of $L_p$ intersection bodies and affine isoperimetric inequalities can be found in the work of Haberl \cite{Hab} and Haberl and Schuster \cite{HabSch} and the references therein.

We now turn to the extension of (\ref{Grassorthog}) to non-symmetric functions. As indicated above, we will apply Theorem \ref{surfCrofton} to a $d$-dimensional polytope $P$. Let $\mathcal{F}_{d-1}(P)$ denote the set of $(d-1)$-dimensional faces (the facets) of $P$, and for any facet $F$ of $P$, let $v_F$ denote its exterior unit normal vector. Then, for $1\le j\le d-1$ and any $L\in G(d,j+1)$, we have
\begin{align*}
\int_{L^\perp}\int_{S^{d-1}}&f(u)\, S_j(P\cap(L+x),du)\,\lambda_{L^\perp}(dx)\\
&=c_6 \sum_{F\in{\mathcal F}_{d-1}(P)}
\int_{L^\perp}V_j(F\cap(L+x))\\
&\quad\times\int_{S^{d-1}\cap(v_F^\perp\cap L)^\perp\cap(v_F|L)^+}f(u)\,\sigma_{(v_F^\perp\cap L)^\perp}(du)\, \lambda_{L^\perp}(dx)\\
&=c_6 \sum_{F\in{\mathcal F}_{d-1}(P)}V_{d-1}(F)|\langle L^\perp,v_F^\perp\cap(v_F^\perp\cap L)^\perp\rangle|\left(H_0^{(v_F^\perp\cap L)^\perp}f\right)(\widehat{v_F|L}),
%\int_{S^{d-1}\cap(v_F^\perp\cap L)^\perp\cap(v_F|L)^+}f(u)\lambda_{d-j-1}^{(v_F^\perp\cap L)^\perp}(du)dx.
\end{align*}
where $\widehat{v_F|L}$ is the unit vector in direction $v_F|L$ (provided $v_F|L\not= 0$) and varying constants depending only on $d,j$ are denoted by $c_6,c_7,\ldots$ 

Again, using the Blaschke-Petkantschin formulas, we have
\begin{align*}
\int_{A(d,j+1)}&\int_{S^{d-1}}f(u)\, S_j(P\cap E,du)\, \mu_{j+1}(dE)\\
&=c_{7}\sum_{F\in{\mathcal F}_{d-1}(P)}V_{d-1}(F)\\
&\quad\times\int_{G(v_F^\perp,j)}\int_{G(M,j+1)}[L,v_F^\perp]^{j+1}\left(H_0^{M^\perp}f\right)
(\widehat{v_F|L})\,\nu_{j+1}^M(dL)\,\nu_j^{v_F^\perp}(dM).
\end{align*}
Now, for $M\in G(v_F^\perp,j)$, the manifold $G(M,j+1)$ can be identified with the half-sphere $S^{d-1}\cap M^\perp\cap v_F^+$. In which case
\begin{align*}
\int_{G(M,j+1)}&[L,v_F^\perp]^{j+1} \left(H_0^{M^\perp}f\right)(\widehat{v_F|L})\,\nu_{j+1}^M(dL)\\
&=c_8 \int_{S^{d-1}\cap M^\perp\cap v_F^+}\langle u,v_F\rangle^{j+1}\left(H_0^{M^\perp}f\right)(u)\, 
\sigma_{M^\perp}(du)\\
&=c_8\left(H_{j+1}^{M^\perp}H_0^{M^\perp}f\right)(v_F).
\end{align*}
For any vector $v\in S^{d-1},\ N\in G(\langle v\rangle,d-j)$ and $f\in C^\infty(S^{d-1})$ we define $f_j^v\in C^\infty(G(\langle v\rangle,d-j))$ by
$$
f_j^v(N)=\left(H_{j+1}^{N}H_0^{N}f\right)(v).
$$
Here, $\langle v\rangle\in G(d,1)$ is the line in direction $v$.
Then 
\begin{align*}
\int_{A(d,j+1)}\int_{S^{d-1}}&f(u)\,S_j(P\cap E,du)\,\mu_{j+1}(dE)\\
&=c_9\sum_{F\in{\mathcal F}_{d-1}(P)}V_{d-1}(F)(R_{j,d-1}(f_j^{v_F})^\perp)(v_F^\perp).
\end{align*}
Thus the Crofton formula gives
\begin{align*}
\sum_{F\in{\mathcal F}_{d-1}(P)}V_{d-1}(F)&(R_{d-j,1}f_j^{v_F})(\langle v_F\rangle)
=c_{10}\sum_{F\in{\mathcal F}_{d-1}(P)}V_{d-1}(F)(I_jI_1f)(-v_F).
\end{align*}
If $K_j^d:C^\infty(S^{d-1})\to C^\infty(S^{d-1})$ is defined by
$$
(K_j^df)(v)=(R_{d-j,1}f_j^{v})(\langle v\rangle )\qquad\text{for }v\in S^{d-1},
$$
it can be seen that $K_j^d$ intertwines the group action of $SO(d)$ in the sense that
$(K_j^df)_\rho=K_j^df_\rho$ for each $\rho\in SO(d)$. It then follows from Schur's lemma that $K_j^d$ has the spaces of spherical harmonics as its eigenspaces. In particular, if $f\in C^\infty(S^{d-1})$ is centered then so is $K_j^df$, since scalar products are first degree spherical harmonics. The same observation is true of $I_jI_1f$. In this notation, our result above shows that
$$
\int_{S^{d-1}}(K_j^df)(v)\,S_{d-1}(K,dv)=c_{11}\int_{S^{d-1}}(I_jI_1f)(-v)\, S_{d-1}(K,dv)
$$
for all $f\in C^\infty(S^{d-1})$ and all polytopes $K$. A continuity argument implies that this is true for arbitrary convex bodies $K$. Thus $K_j^df$ and $I^*I_jI_1f$ differ only by a linear function. In particular, for centered $f$, they are the same. 

Hence, we obtain the following analogue to Koldobsky's orthogonality result, for arbitrary (centered) functions $f\in C^\infty(S^{d-1}) $.

\begin{theorem} For $1\le j\le d-1$, any centered function $f\in C^\infty(S^{d-1}) $ and all $v\in S^{d-1}$, we have
\begin{equation}\label{asymm}
(R_{d-j,1}(I_{d-j}f)_j^v)(\langle v\rangle)=b(d,j)\,(I_1f)(v)
\end{equation}
with
$$
b(d,j)=\frac{j\,2^{d-j-3}}{\pi(d-1)}\, 
\Gamma\left(\tfrac{d-j}{2}\right)^2 .
$$
\end{theorem}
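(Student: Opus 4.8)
The plan is to read the statement off the operator identity already assembled in the discussion immediately preceding it, specialized to the argument $I_{d-j}f$, and to fix the constant by testing on a single function.

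First I would recall the outcome of that discussion. Running the facet decomposition of $S_j(P\cap E,\cdot)$ for a full-dimensional polytope $P$, the iterated Blaschke-Petkantschin reductions, the identification of the emerging asymmetric cosine transforms $H_0^M,H_{j+1}^M$, and the Crofton formula of Theorem \ref{surfCrofton} with $q=j+1$, one arrives at
$$\int_{S^{d-1}}(K_j^d f)(v)\,S_{d-1}(K,dv)=c\int_{S^{d-1}}(I^*I_jI_1 f)(v)\,S_{d-1}(K,dv)$$
for every $f\in C^\infty(S^{d-1})$ and every convex body $K$, with an explicit constant $c=c(d,j)$. By Minkowski's existence theorem, a continuous function on $S^{d-1}$ that annihilates $S_{d-1}(K,\cdot)$ for all $K$ is the restriction of a linear form, so $K_j^d f-c\,I^*I_jI_1 f$ is linear; and since $K_j^d$ and $I^*I_jI_1$ both intertwine $SO(d)$, hence act on each $H_n^d$ as a scalar, both send the degree-one component of a centered $f$ to zero, so that linear function is itself centered and therefore vanishes. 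Thus $K_j^d f=c\,I^*I_jI_1 f$ for every centered $f$.

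Then I would substitute $g:=I_{d-j}f$. Because $I_{d-j}$ maps each $H_n^d$ into itself, $g$ is again centered when $f$ is, so $K_j^d g=c\,I^*I_jI_1I_{d-j}f$; by the definition of $K_j^d$ the left-hand side is precisely the left side of \eqref{asymm}. The operators $I_p$ and $I^*$ are multiplier operators and hence commute, so $I^*I_jI_1I_{d-j}=I^*I_1(I_jI_{d-j})=(2\pi)^dI^*I_1I^*=(2\pi)^dI_1$, using the inversion formula \eqref{inversion} and $(I^*)^2=\mathrm{id}$. This gives \eqref{asymm} with $b(d,j)=c\,(2\pi)^d$. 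To compute $b(d,j)$ I would exploit that both sides of \eqref{asymm} are $SO(d)$-equivariant in $f$ and evaluate at $f\equiv 1$: here $I_{d-j}1=\lambda_0(d,d-j)\cdot 1$; $H_0^N1\equiv\tfrac12$ on $S^{d-1}\cap N$, so $(1)_j^v(N)$ is the constant $\tfrac14\int_{S^{d-1}\cap N}|\langle v,u\rangle|^{j+1}\,\sigma_N(du)$, a ratio of Gamma functions in the dimension $d-j$; $R_{d-j,1}$ leaves constants fixed; and $I_11=\lambda_0(d,1)\cdot 1$. Comparing the two sides and simplifying with $\Gamma(\tfrac j2+1)=\tfrac j2\Gamma(\tfrac j2)$ and $\Gamma(\tfrac{d+1}2)=\tfrac{d-1}2\Gamma(\tfrac{d-1}2)$ yields $b(d,j)=\tfrac{j\,2^{d-j-3}}{\pi(d-1)}\Gamma(\tfrac{d-j}2)^2$.

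The genuinely substantive work — the polytope computation with the asymmetric cosine transforms and the repeated Blaschke-Petkantschin identities — has already been carried out before the statement, so here it only needs to be invoked; what remains is bookkeeping. The points that require care are checking that $I_{d-j}f$ stays centered (which depends solely on $I_{d-j}$ preserving each $H_n^d$), and carrying the accumulated constant $c$ faithfully through the commutation of the $I_p$, where the factor $(2\pi)^d$ supplied by \eqref{inversion} is the step most prone to normalization error.
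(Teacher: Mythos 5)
Your proposal is correct and follows essentially the same route as the paper: it invokes the polytope/Blaschke--Petkantschin computation preceding the statement to get $K_j^d f = c\,I^*I_jI_1 f$ on centered functions, substitutes $g=I_{d-j}f$, collapses $I^*I_jI_1I_{d-j}$ to $(2\pi)^d I_1$ via \eqref{inversion}, and fixes $b(d,j)$ by testing on $f\equiv 1$ with the stated spherical integration formula. Your explicit justification via Minkowski's existence theorem of the step ``a function annihilating all $S_{d-1}(K,\cdot)$ is linear'' is a welcome addition that the paper leaves implicit.
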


The explicit value of the constant $b(d,j)$ is determined by letting $f$ be constant and by using the spherical 
integration formula
$$
\int_{S^{d-1}\cap N\cap v^\perp}\langle u,v\rangle^p\, \sigma_N(du)=\frac{1}{2\sqrt{\pi}}
\frac{\Gamma\left(\frac{d-j}{2}\right)\Gamma\left(\frac{p+1}{2}\right)}{\Gamma\left(\frac{d-j+p}{2}\right)}
$$
for $p>-1$, a unit vector $v\in N\in G(d,d-j)$ and $1\le j\le d-1$.

%As an aside, if $f$ is odd and $p\ge0$ is even, we have $H_p=cI_{-p}$.
For comparison with the symmetric case, we note that, for even $f$, 
$$
(H_0^N f)(v)=\frac{1}{2}(R_{1,d-j}f)(N)
$$
is constant for unit vectors $v\in N\in G(d,d-j)$. Thus, for even $f$,
\begin{align*}
f_j^v(N)&=\frac{1}{2}(R_{1,d-j}f)(N)(H_{j+1}^N 1)(N)\\
&=  \frac{j}{8\sqrt{\pi}}\frac{\Gamma\left(\frac{d-j}{2}\right)\Gamma\left(\frac{j}{2}\right)}{
\Gamma\left(\frac{d+1}{2}\right)}\,(R_{1,d-j}f)(N),
\end{align*}
and so equation \eqref{asymm} is just (\ref{Grassorthog2}). For the extreme case, $j=d-1$, and arbitrary centered functions, \eqref{asymm} is trivial. For the other extreme case, $j=1$, we have
$$
(I_{d-1}f)_1^v(u^\perp)=(H_2^{u^\perp}H_0^{u^\perp}I_{d-1}f)(v) 
$$
for $u,v\in S^{d-1}$ with $\langle u,v\rangle=0$, 
and so the above result is, for any centered $f\in C^\infty(S^{d-1})$ and $v\in S^{d-1}$,
\begin{align*}
(I_1f)(v)
&=b(d,1)^{-1}\int_{S^{d-1}\cap v^\perp}\int_{S^{d-1}\cap u^\perp\cap v^+}\langle v,w\rangle^2\\
&\qquad \times \int_{S^{d-1}\cap u^\perp\cap w^+}(I_{d-1}f)(\theta)\,\sigma_{u^\perp}(d\theta)\,\sigma_{u^\perp}(dw)\,\sigma_{v^\perp}(du),
\end{align*}
which, unlike the corresponding case in Koldobsky's original result,  seems more complicated. 

If we restrict \eqref{asymm} to odd functions $f$, the result can be written, solely in terms of Fourier integral operators. To see this, let $I_p^{u^\perp}$ denote the Fourier integral operator calculated in the space $u^\perp$. Then, using  \cite[(2.15)]{GYY} twice, we get
\begin{align*}
(I_1f)(v)
&=\left(-\frac{1}{i\pi}\right)\frac{1}{\omega_{d-1}}b(d,1)^{-1}\int_{S^{d-1}\cap v^\perp}\int_{S^{d-1}\cap u^\perp\cap v^+}\langle v,w\rangle^2\\
&\qquad \times (I_0^{u^\perp}I_{d-1}f)(w)\sigma_{u^\perp}(dw)\,\sigma_{v^\perp}(du)\\
&=\left(-\frac{1}{i\pi}\right)\left(\frac{2}{i\pi}\right)\frac{1}{\omega_{d-1}^2}b(d,1)^{-1}
\int_{S^{d-1}\cap v^{\perp}}(I_{-2}^{u^\perp}I_0^{u^\perp}I_{d-1}f)(v)\,\sigma_{v^\perp}(du),
\end{align*}
and hence
$$
(I_1f)(v)=\frac{d-1}{2^{d-3}\pi^d}\int_{S^{d-1}\cap v^{\perp}}(I_{-2}^{u^\perp}I_0^{u^\perp}I_{d-1}f)(v)\,\sigma_{v^\perp}(du).
$$

It should be mentioned that formula \eqref{asymm}, once established, can be proved in a more direct way since both sides comprise intertwining operators. Since the multipliers (with respect to spherical harmonics) of these operators can be calculated, a comparison of the resulting values would be sufficient. The main task, which was completed by the proof we gave, was to find the formula, and to see it as an analogue of \eqref{Grassorthog}.

\noindent
Author's addresses:

\bigskip

\noindent
Paul Goodey, University of Oklahoma, Department of Mathematics, Norman, OK 73019, U.S.A., email: pgoodey@ou.edu

\bigskip
\noindent
Daniel Hug, Karlsruhe Institute of Technology (KIT), 
Department of Mathematics, 
D-76128 Karlsruhe, Germany, email: daniel.hug@kit.edu

\bigskip

\noindent
Wolfgang Weil, Karlsruhe Institute of Technology (KIT), 
Department of Mathe\-matics, 
D-76128 Karls\-ruhe, Germany, email: wolfgang.weil@kit.edu

\end{document}